\definecolor{darkblue}{RGB}{0,0,160}
\tikzstyle{invisivertex} = [black, shape=rectangle, minimum size=0pt, inner sep=3pt]
\tikzstyle{circledvertex} = [violet!80, draw, shape=circle, minimum size=0pt, inner sep=3pt]
\tikzstyle{dot} = [black, fill, shape=circle, minimum size=3pt, inner sep=2pt]
\tikzstyle{opendot} = [draw, black, shape=circle, minimum size=6pt, inner sep=1pt]
\tikzstyle{edgelabel} = [magenta, shape=circle, minimum size=6pt, inner sep=1pt]
\newcommand{\set}[2]{\left\{ #1 \;|\; #2 \right\}}
\newcommand{\ie}{{\textit{i.e.}}}
\newcommand{\cB}{\mathcal{B}}
\newcommand{\cL}{\mathcal{L}}
\newcommand{\Z}{\mathbb{Z}}
\newcommand{\cC}{\mathcal{C}}
\newcommand{\cF}{\mathcal{F}}
\newcommand{\RR}{\mathbb{R}}
\newcommand{\aaa}{\mathbf{a}}
\newcommand{\bbb}{\mathbf{b}}
\newcommand{\Poin}{\mathcal{\sf Poin}}
\newcommand{\Des}{{\sf Des}}
\newcommand{\StatMon}{{\sf u}}
\newcommand{\rank}{{\sf rank}}
\newcommand{\wt}{{\sf wt}}
\newcommand{\N}{\mathbb{N}}
\newcommand{\des}{\mathsf{des}}
\newcommand{\Inv}{\mathsf{Inv}}
\newcommand{\asc}{\mathsf{des}}
\newcommand{\extPsi}{{\text{\scriptsize\sf ex}\hspace*{-1pt}}\Psi}
\newcommand{\extPsiIota}{{\text{\scriptsize\sf ex}\hspace*{-1pt}}\widetilde\Psi}
\newcommand{\cfHP}{{\sf cfHP}}
\newcommand{\CHilb}{\underline{\CHilbAug}}
\newcommand{\CHilbAug}{\textsf{H}}
\newcommand{\redChi}{\overline{\Chi}}
\newcommand{\Chi}{\chi}
\newcommand{\Dfn}[1]{\emph{\bfseries #1}}
\newtheorem{lemma}{Lemma}
\newtheorem{proposition}[lemma]{Proposition}
\newtheorem{theorem}[lemma]{Theorem}
\newtheorem{corollary}[lemma]{Corollary}
\theoremstyle{definition}
\newtheorem{definition}[lemma]{Definition}
\newtheorem{to do}[lemma]{To Do}
\newtheorem{remark}[lemma]{Remark}
\newtheorem{question}[lemma]{Question}
\newtheorem{example}[lemma]{Example}
\numberwithin{lemma}{section}
\title[Chow polynomials as evaluations of extended $\aaa\bbb$-indices]{Chow and augmented Chow polynomials as evaluations of Poincaré-extended $\aaa\bbb$-indices}
\author[C.~Stump]{Christian Stump}
\address[C.~Stump]{Ruhr-Universit\"at Bochum, Germany}
\email{christian.stump@rub.de}
\subjclass[2020]{Primary 06A07; Secondary 05B35, 52C40}
\begin{document}
\maketitle

\begin{abstract}
  We show that Chow polynomials and augmented Chow polynomials of matroids, and more generally of finite graded posets admitting $R$-labelings, are obtained as evaluations of their Poincaré-extended $\aaa\bbb$-indices.
  This implies in particular explicit combinatorial $\gamma$-positive expansions for both, providing the first proof of the $\gamma$-positivity not relying on the Kähler package for the Chow ring.
  We then evaluate this expansion to obtain an explicit closed formula for the braid arrangement.
\end{abstract}

\section{Main results}
\label{sec:main}

The Chow ring and the augmented Chow ring are important algebro-geometric objects attached to a loopless matroid that have been introduced in~\cite{MR2038195} and in~\cite{MR4477425}, respectively.
They have attracted a lot of attention in recent years, maybe most notable in their celebrated use in the proof of the log-concavity of the Whitney numbers of the first kind in~\cite{MR3862944}.
We follow~\cite{MR4749982} in our notations and call their Hilbert-Poincaré series~$\CHilb_M(x)$ and $\CHilbAug_M(x)$ the \Dfn{Chow polynomial} and the \Dfn{augmented Chow polynomial}.

\medskip

Recall that the \emph{lattice of flats}~$\cL(M)$ of a matroid~$M$ is geometric and thus admits an \Dfn{$R$-labeling}~$\lambda$, i.e., an edge-labeling~$\lambda$ such that each interval in~$\cL(M)$ contains a unique maximal chain along which~$\lambda$ is (weakly) increasing~\cite{MR0354473}.
For our purposes, we assume~$\lambda$ to have nonnegative integer values.
To a maximal chain $ \cF = \{\cF_0 \prec \dots \prec \cF_n\}$ in~$\cL(M)$ associate the sequence $\lambda_\cF = (\lambda_1,\dots,\lambda_n)$ of edge labels $\lambda_i = \lambda(\cF_{i-1} \prec \cF_i)$ for~$n = \rank(M)$.
We moreover set $\Des(\cF) = \{ i \mid \lambda_i > \lambda_{i+1}\}$ and $\des(\cF) = |\Des(\cF)|$, and we call $\Des(\cF)$ \Dfn{isolated}\footnote{This property has been called \emph{good} in~\cite{FerroniEtAl} and \emph{stable} in~\cite{brandenvecchi}.} if $i \in \Des(\cF)$ implies $i+1 \notin \Des(\cF)$.

\medskip

The following two theorems are the main results of this paper.

\begin{theorem}
\label{thm:main1}
  The Chow polynomial~$\CHilb_M(x)$ has the expansion
  \begin{align*}
     \CHilb_M(x)
     &= \sum_\cF x^{\des(\cF)} (x+1)^{n-1-2\ \!\!\des(\cF)}\,,
  \end{align*}
   where the sum ranges over all maximal chains~$\cF$ in~$\cL(M)$ such that $\Des(\cF)$ is isolated and $1 \notin \Des(\cF)$.
\end{theorem}

\begin{theorem}
\label{thm:main2}
  The augmented Chow polynomial~$\CHilbAug_M(x)$ has the expansion
  \begin{align*}
     \CHilbAug_M(x)
     &= \sum_\cF x^{\des(\cF)} (x+1)^{n-2\ \!\! \des(\cF)}\,,
  \end{align*}
   where the sum ranges over all maximal chains~$\cF$ in~$\cL(M)$ such that $\Des(\cF)$ is isolated.
\end{theorem}

\begin{corollary}
  The Chow polynomial~$\CHilb_M(x)$ and the augmented Chow polynomial~$\CHilbAug_M(x)$ are $\gamma$-positive.
  They are, in particular, nonnegative, palindromic and unimodal.
\end{corollary}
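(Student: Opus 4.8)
The plan is to read off $\gamma$-positivity directly from the two expansions in Theorems~\ref{thm:main1} and~\ref{thm:main2} and then quote its standard consequences. Recall that a polynomial $h(x)=\sum_{i\ge 0}h_ix^i$ which is palindromic of degree $d$, meaning $x^d h(1/x)=h(x)$, is called \emph{$\gamma$-positive} if it admits a representation $h(x)=\sum_{i=0}^{\lfloor d/2\rfloor}\gamma_i\,x^i(x+1)^{d-2i}$ with all $\gamma_i\ge 0$. Such a polynomial is automatically nonnegative, palindromic and unimodal: each basis polynomial $x^i(x+1)^{d-2i}$ has nonnegative and unimodal coefficients and is palindromic with center $d/2$, and a nonnegative linear combination of palindromic unimodal polynomials sharing a common center is again palindromic and unimodal. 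It therefore suffices to check that the right-hand sides of Theorems~\ref{thm:main1} and~\ref{thm:main2} really are $\gamma$-expansions, the only nontrivial point being that the exponents of $(x+1)$ occurring there are nonnegative.

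For $\CHilb_M(x)$ set $d=n-1$, where $n=\rank(M)$. Theorem~\ref{thm:main1} writes $\CHilb_M(x)=\sum_\cF x^{\des(\lambda_\cF)}(x+1)^{d-2\des(\lambda_\cF)}$, so grouping the admissible chains $\cF$ according to the value $\des(\lambda_\cF)$ exhibits the $\gamma$-vector as a vector of nonnegative integers counting chains, provided $2\des(\lambda_\cF)\le n-1$ for every chain in the sum. To see this, write $\lambda_\cF=(\lambda_1,\dots,\lambda_n)$ and let $D\subseteq\{1,\dots,n-1\}$ be its descent set. The hypothesis $\lambda_1<\lambda_2$ gives $1\notin D$, and the hypothesis that $\lambda_i>\lambda_{i+1}$ forces $\lambda_{i-1}<\lambda_i$ for $i\in\{2,\dots,n-1\}$ says precisely that $D$ contains no two consecutive integers. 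Hence $i\mapsto\{i-1,i\}$ maps $D$ injectively to a family of pairwise disjoint $2$-subsets of $\{1,\dots,n-1\}$, and therefore $2|D|\le n-1$.

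For $\CHilbAug_M(x)$ set $d=n$. Theorem~\ref{thm:main2} writes $\CHilbAug_M(x)=\sum_\cF x^{\des(\lambda_\cF)}(x+1)^{d-2\des(\lambda_\cF)}$, and the same bookkeeping reduces matters to the bound $2\des(\lambda_\cF)\le n$. Once more the hypothesis forces the descent set $D\subseteq\{1,\dots,n-1\}$ to contain no two consecutive integers, so now $i\mapsto\{i,i+1\}$ embeds $D$ into a family of pairwise disjoint $2$-subsets of $\{1,\dots,n\}$, giving $2|D|\le n$. In both cases the expansion is thus a genuine $\gamma$-expansion with nonnegative coefficients, and the asserted nonnegativity, palindromicity and unimodality follow. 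I do not expect a real obstacle here, since essentially all of the content is already carried by Theorems~\ref{thm:main1} and~\ref{thm:main2}; the single step that deserves care is the nonnegativity of the exponents $n-1-2\des(\lambda_\cF)$ and $n-2\des(\lambda_\cF)$, which is exactly what the two disjointness arguments above supply, the remaining implications being the textbook consequences of $\gamma$-positivity recalled in the first paragraph.
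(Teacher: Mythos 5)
Your proposal is correct and follows the same route as the paper, which derives the corollary directly from the $\gamma$-expansions in Theorems~\ref{thm:main1} and~\ref{thm:main2}. The paper leaves the verification implicit; your check that the descent sets contain no two consecutive positions (and exclude position~$1$ in the Chow case), so that the exponents $n-1-2\des(\lambda_\cF)$ and $n-2\des(\lambda_\cF)$ are nonnegative, is exactly the detail needed to make the deduction airtight.
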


To the best of our knowledge, the only known proof of the $\gamma$-positivity appears in \cite{MR4749982} and relies on semi-small decompositions for the Chow and the augmented Chow rings~\cite{MR4477425} or, equivalently, on the Kähler package for these rings~\cite{MR3862944}.
Hence, \Cref{thm:main1,,thm:main2} may be viewed as a Hodge-theory free proof to the $\gamma$-positivity and also of unimodality of Chow and augmented Chow polynomials of geometric lattices.
We prove \Cref{thm:main1,,thm:main2} in \Cref{sec:evaluation}.
In \Cref{sec:typeA}, we then use those results to provide new explicit closed formulas for the (augmented) Chow polynomial in the case of the braid arrangement.

\begin{remark}[Generalization to finite graded posets admitting $R$-labelings]
  In work by Ferroni--Matherne--Vecchi, the definitions of the Chow and augmented Chow polynomials are generalized to finite graded posets with~$\hat{0}$ and~$\hat{1}$~\cite{FerroniEtAl}, see \Cref{def:Chow}.
  Using this definition for general finite graded posets, our proofs hold for all such posets admitting $R$-labelings.
  We, in particular, obtain non-negativity, palindromicity, unimodality, and $\gamma$-positivity in this level of generality.
\end{remark}

To prove the main results, we show for finite graded posets admitting $R$-labelings that the Chow and the augmented Chow polynomials are obtained as evaluations of the Poincaré-extended $\aaa\bbb$-index defined and studied by the author and Dorpalen-Barry--Maglione in~\cite{poincareextended}.
It turns out that the Chow polynomial, but not the augmented Chow polynomial, is an evaluation of the coarse flag Hilbert-Poincaré series as defined and studied by Maglione--Voll in~\cite{MaglioneVoll}, which itself is also an evaluation of the extended $\aaa\bbb$-index.
As the Poincaré-extended $\aaa\bbb$-indices of finite graded poset admitting $R$-labelings have explicit descriptions using the labeling along maximal chains, we then deduce~\Cref{thm:main1,,thm:main2}.

\begin{remark}[Three existing real-rootedness conjectures]
  In \Cref{thm:main3}, we show that the Chow polynomial is an evaluation of the extended $\aaa\bbb$-index.
  This evaluation is conjectured to be real-rooted~\cite[Conjecture~8.18]{ferroni2023valuativeinvariantslargeclasses}.
  It is shown in~\cite[Corollary~2.23]{poincareextended} that the numerator polynomial of the coarse flag Hilbert-Poincaré series is an evaluation of the extended $\aaa\bbb$-index and asked in~\cite[Question~1.6]{MR4614155} for which matroids a further evaluation is real-rooted.
  Also the chain polynomial of the lattice of flats is yet another evaluation of the extended $\aaa\bbb$-index which is conjectured in~\cite[Conjecture~1.2]{MR4565299} to be real-rooted.
  This suggests the following question.

  \begin{question}
    Which evaluations of the extended $\aaa\bbb$-index of a matroid, or of a more general finite graded poset admitting an $R$-labeling, are real-rooted?
  \end{question}
\end{remark}

\subsection*{Acknowledgment}

This work was conducted at the \emph{Mathematisches For\-schungs\-institut Oberwolfach} in June 2024 during the workshop ``Arrangements, Matroids and Logarithmic Vector Fields''.
We very much thank the institute for their hospitality and the organizers Takuro Abe, Graham Denham, Eva-Maria Feichtner, and Gerhard Röhrle for setting up this wonderful and inspiring meeting.
The results in this paper were obtained after attending the talk by Luis Ferroni entitled ``Chow polynomials of posets''.
The author would very much like to thank Luis and his collaborators Chris Eur, Jacob Matherne, and Lorenzo Vecchi, and as well Elena Hoster and Lukas Kühne for inspiring discussions concerning Chow polynomials and their combinatorial interpretations, and for comments on a preliminary version of this paper.
He also thanks three anonymous referees for their thoughtful comments that helped to improve the presentation of this paper.

\section{The Poincaré-extended $\aaa\bbb$-index and its evaluations}
\label{sec:evaluation}

Let~$P$ be a finite \Dfn{graded poset} of rank~$n$.
That is,~$P$ is a finite poset with unique minimum element~$\hat 0$ and unique maximum element~$\hat 1$ of rank~$n$ such that $\rank(F)$ is equal to the length of any maximal chain from~$\hat 0$ to~$F$.
Its \Dfn{Möbius function}~$\mu$ is given by $\mu(F, F) = 1$ for all $F\in P$ and $\mu(F,G) = -\sum_{F\leq H < G} \mu(F, H)$ for all $F < G$.
Its \Dfn{Poincaré polynomial} is
\begin{align*}
  \Poin_P(q)
  &= \sum_{F\in P} \mu(\hat{0}, F) \cdot (-q)^{\rank(F)}\,,
  \intertext{
    its \Dfn{characteristic polynomial} is
  }
  \Chi_P(q)
  &= \sum_{F\in P} \mu(\hat{0}, F) \cdot q^{n - \rank(F)}\,, \\
  \intertext{
    and its \Dfn{reduced characteristic polynomial} is
  }
  \redChi_P(q)
  &= \Chi_P(q) / (q-1) 
   = q^{n}\cdot \Poin_P(-q^{-1}) / (q-1)\,.
\end{align*}
A \Dfn{chain} $\cC = \{\cC_1 < \dots < \cC_k < \cC_{k+1}\}$ in~$P$ is an ordered set of pairwise comparable elements.
We assume throughout this paper that all chains end in the maximum element $\cC_{k+1} = \hat 1$.
A chain is \Dfn{maximal} if~$k = n$ and we usually write $\cF = \{\cF_0 \prec \dots \prec \cF_n\}$ in this case in order to clearly distinguish between general chains ending in~$\hat 1$ and maximal chains.
The \Dfn{chain Poincaré polynomial} and the \Dfn{reduced chain characteristic polynomial} of such a chain~$\cC$ are
\[
  \Poin_{P,\cC}(q)   =\prod_{i = 1}^k \Poin_{[\cC_i,\cC_{i+1}]}(q)
  \quad\text{ and }\quad
  \redChi_{P,\cC}(q) =\prod_{i = 1}^k \redChi_{[\cC_i,\cC_{i+1}]}(q)\,,
\]
where $[\cC_i,\cC_{i+1}] \subseteq P$ denotes the interval between two consecutive elements in the chain.
In particular,
\[
  \Poin_{P,\{\hat{0} < \hat 1\}}(q) = \Poin_P(q), \quad \redChi_{P,\{\hat{0} < \hat 1\}}(q) = \redChi_P(q)\,.
\]

\subsection{The Chow and the augmented Chow polynomials}

The following description of the Chow and augmented Chow polynomials of a matroid is taken from~\cite[Theorem~1.3 \& 1.4]{MR4749982}.
We present it for general finite graded posets as introduced by~\cite{FerroniEtAl}.

\begin{definition}[\!\!{\cite[Definition~3.3]{FerroniEtAl}}]
\label{def:Chow}
  Let~$P$ be a finite graded poset.
  The \Dfn{Chow polynomial} is defined to be~$1$ if $\rank(P) = 0$ and otherwise
  \[
    \CHilb_P(x)
      = \sum_{\cC} \redChi_{P,\cC}(x) \ \in \Z[x]\,,
  \]
    where the sum ranges over all chains $\cC = \{\cC_1 < \dots < \cC_{k+1}\}$ in~$P$ starting at $\cC_1 = \hat 0$ and ending in $\cC_{k+1} = \hat 1$.
  The \Dfn{augmented Chow polynomial} is defined to be~$1$ if $\rank(P) = 0$ and otherwise
    \[
      \CHilbAug_P(x)
        = \sum_{\cC} x^{\rank(\cC_1)}\cdot\redChi_{P,\cC}(x) \ \in \Z[x]\,,
    \]
    where the sum ranges over all chains $\cC = \{ \cC_1 < \dots < \cC_{k+1}\}$ in~$P$ ending in $\cC_{k+1} = \hat 1$.
\end{definition}

It is shown in \cite[Theorems~1.3 \& 1.4]{MR4749982} that for the lattice of flats $P = \cL(M)$, this definition agrees with the Hilbert-Poincaré series of the Chow and the augmented Chow rings for a loopless matroid~$M$ as presented in \Cref{sec:main}.

\subsection{The Poincaré-extended $\aaa\bbb$-index}
\label{sec:abindex}

The following description and properties of the extended $\aaa\bbb$-index are taken from~\cite{poincareextended}.
Throughout this section, let~$P$ be a finite graded poset admitting an \Dfn{$R$-labeling}, \ie, an  edge-labeling $\lambda : \mathcal E(P) \rightarrow \mathbb{N}_+$ such that each interval in~$P$ contains a unique maximal chain along which~$\lambda$ is (weakly) increasing.

\medskip

Let $\Z[y]\langle\aaa,\bbb\rangle$ be the polynomial ring in two noncommuting variables $\aaa,\bbb$ with coefficients being polynomials in the variable~$y$.
For a subset $S \subseteq \{0,\dots,n-1\}$, we set $\wt_S(\aaa,\bbb) = w_0\cdots w_{n-1}$ for the polynomial with
\begin{equation}
\label{eqn:ext-weights}
  w_k = \begin{cases}
          \bbb        & \text{if } k\in S, \\
          \aaa - \bbb & \text{if } k\notin S\,.
        \end{cases}
\end{equation}
For a chain $\cC = \{\cC_1 < \dots < \cC_k < \cC_{k+1}\}$ in~$P$, ending in $\cC_{k+1} = \hat 1$, we moreover set
\[
  \wt_\cC = \wt_{\{\rank(\cC_1),\dots,\rank(\cC_k)\}}\,.
\]

\begin{definition}
\label{def:extab}
  The (\Dfn{Poincaré-})\Dfn{extended $\aaa\bbb$-index} of~$P$ is
  \[
    \extPsi_P(y,\aaa,\bbb)
      = \sum_{\cC} \Poin_{P,\cC}(y) \cdot \wt_{\cC}(\aaa,\bbb) \ \in \Z[y]\langle\aaa,\bbb\rangle\,,
  \]
  where the sum ranges over all chains~$\cC = \{\cC_1 <\dots < \cC_{k+1}\}$ in~$P$ ending in $\cC_{k+1} = \hat 1$.
\end{definition}

Since~$P$ has a unique minimum, we always have $\Poin_P(0) = 1$, implying
\begin{equation}
\label{eq:abindex}
  \extPsi_P(0,\aaa,\bbb)
  = \Psi_P(\aaa,\bbb)
  = \sum_{\cC} \wt_\cC(\aaa,\bbb)
\end{equation}
for (a mild variation of) the \Dfn{$\aaa\bbb$-index} $\Psi_P(\aaa,\bbb)$ as given, for example, in~\cite[Section~2]{bayer-survey}.

\begin{proposition}[{\!\!\cite[Corollary~2.22]{poincareextended}}]
\label{prop:iotaapp}
  We have
  \[
    \bbb\cdot\iota\big(\extPsi_P(y,\aaa,\bbb)\big) = \sum_{\cC} \Poin_{P,\cC}(y) \cdot \wt_\cC(\aaa,\bbb)\ \in \N[y]\langle \aaa,\bbb\rangle \,,
  \]
  where the sum ranges over all chains $\cC = \{\cC_1 < \dots < \cC_{k+1}\}$ starting in $\cC_1 = \hat 0$ and ending in $\cC_{k+1} = \hat 1$ and where~$\iota$ deletes the first variable from every $\aaa\bbb$-monomial.
\end{proposition}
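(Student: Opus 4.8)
The plan is to compute $\bbb\cdot\iota\big(\extPsi_P(y,\aaa,\bbb)\big)$ directly from the definition, splitting the sum over chains $\cC = \{\cC_1 < \dots < \cC_{k+1}\}$ with $\cC_{k+1} = \hat 1$ according to whether $\cC_1 = \hat 0$. The key structural point is that, since $P$ is graded, $\hat 0$ is the only element of rank $0$; hence if $\cC_1 \neq \hat 0$ then $\rank(\cC_i) \ge 1$ for all $i$, so $0 \notin \{\rank(\cC_1),\dots,\rank(\cC_k)\}$ and the leading letter $w_0$ of $\wt_\cC = w_0 w_1\cdots w_{n-1}$ equals $\aaa - \bbb$, whereas if $\cC_1 = \hat 0$ then $w_0 = \bbb$.

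First I would record the elementary fact that the $\Z[y]$-linear map $\iota$, which deletes the first variable from each $\aaa\bbb$-monomial of positive degree, satisfies $\iota(\aaa\,u) = \iota(\bbb\,u) = u$ for every $u \in \Z[y]\langle\aaa,\bbb\rangle$; consequently $\iota\big((\aaa-\bbb)\,u\big) = 0$ and $\bbb\cdot\iota(\bbb\,u) = \bbb\,u$. Applying this to the factorisation $\wt_\cC = w_0\cdot(w_1\cdots w_{n-1})$ together with the dichotomy above: a chain with $\cC_1 \neq \hat 0$ contributes $\iota(\wt_\cC) = \iota\big((\aaa-\bbb)(w_1\cdots w_{n-1})\big) = 0$, while a chain with $\cC_1 = \hat 0$ satisfies $\bbb\cdot\iota(\wt_\cC) = \bbb\cdot\iota\big(\bbb(w_1\cdots w_{n-1})\big) = \wt_\cC$.

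Then I would apply $\bbb\cdot\iota(-)$ term by term to $\extPsi_P(y,\aaa,\bbb) = \sum_\cC \Poin_{P,\cC}(y)\,\wt_\cC(\aaa,\bbb)$, using $\Z[y]$-linearity of $\iota$ and of left multiplication by $\bbb$: every term indexed by a chain not starting at $\hat 0$ dies, and every term indexed by a chain with $\cC_1 = \hat 0$ is reproduced unchanged, giving
\[
  \bbb\cdot\iota\big(\extPsi_P(y,\aaa,\bbb)\big) = \sum_{\cC} \Poin_{P,\cC}(y)\,\wt_\cC(\aaa,\bbb)\,,
\]
the sum now over all chains with $\cC_1 = \hat 0$ and $\cC_{k+1} = \hat 1$, which is the claimed identity. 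For the refinement that the result lies in $\N[y]\langle\aaa,\bbb\rangle$ rather than merely $\Z[y]\langle\aaa,\bbb\rangle$, I would invoke the positivity of $\extPsi_P$ itself — the combinatorial description of the extended $\aaa\bbb$-index of an $R$-labelable poset from~\cite{poincareextended} — and note that both $\iota$ and left multiplication by $\bbb$ carry $\N[y]\langle\aaa,\bbb\rangle$ into itself.

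The argument is almost entirely formal, so there is no real obstacle to overcome; the only genuine use of hypotheses is the identification of ``$0$ lies in the rank set of $\cC$'' with ``$\cC_1 = \hat 0$'', which rests on $P$ being graded, together with the externally supplied non-negativity of $\extPsi_P$ for the $\N[y]$-statement. The remaining care is purely bookkeeping — applying the leading-letter reduction consistently and disposing of the degenerate case $\rank(P) = 0$ by the standard convention.
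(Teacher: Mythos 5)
Your proof is correct. The paper does not prove this proposition itself --- it imports it from \cite[Corollary~2.22]{poincareextended} --- but the key step of your argument (the dichotomy that $w_0=\bbb$ exactly when $\cC_1=\hat 0$, since $\hat 0$ is the unique rank-$0$ element, so that $\iota$ annihilates the $(\aaa-\bbb)$-leading terms of all other chains and $\bbb\cdot\iota$ reproduces the rest) is precisely the observation the paper records immediately after the statement, and your handling of the $\N[y]$-positivity via Theorem~\ref{thm:combinatorial-interp} is sound.
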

Observe that the factor~$\bbb$ on the left-hand side comes from the assumption that $\cC_1 = \hat 0$ and thus every $\aaa,\bbb$-polynomial $\wt_\cC(\aaa,\bbb)$ starts with the variable~$\bbb$ as $0 = \rank(\cC_1)$.
For later reference, we set
\begin{equation}
  \label{eq:extpsiiota}
  \extPsiIota_P(y,\aaa,\bbb) = \iota\big(\extPsi_P(y,\aaa,\bbb)\big)\,.
\end{equation}

We next present two of the main result of~\cite{poincareextended}, from which we then deduce the proposed evaluation to obtain the Chow and the augmented Chow polynomials.
As in \Cref{sec:main}, we write $\lambda_\cF = (\lambda_1,\dots,\lambda_n)$ for the labeling along a maximal chain~$\cF = \{\cF_0 \prec \dots \prec \cF_n\}$ with $\lambda_i = \lambda(\cF_{i-1} \prec \cF_i)$.
For a subset $E \subseteq \{1,\dots,n\}$, we moreover set $\lambda_{\cF,E} = (\lambda'_0,\lambda'_1,\dots,\lambda'_n)$ with $\lambda'_0 = 0$ and $\lambda'_i = -\lambda_i$ if $i \in E$ and $\lambda'_i = \lambda_i$ if $i \notin E$.
We in addition define $\StatMon_{\cF,E}(\aaa,\bbb) = u_0 \cdots u_{n-1}$ to be the monomial in $\aaa,\bbb$ with
\[
  u_i = \begin{cases}
           \aaa &\text{if } \lambda'_i \leq \lambda'_{i+1}\,, \\
           \bbb &\text{if } \lambda'_i > \lambda'_{i+1}\,, \\
        \end{cases}
\]
so the monomial $\StatMon_{\cF,E}(\aaa,\bbb)$ detects the ascent-descent pattern of~$\lambda_{\cF,E}$.

\begin{theorem}[{\!\!\cite[Theorem~2.7]{poincareextended}}]
\label{thm:combinatorial-interp}
  Let~$P$ be a finite graded poset of rank~$n$, admitting an $R$-labeling~$\lambda$.
  Then 
  \[
    \extPsi_P(y,\aaa,\bbb)
      = \sum_{\cF,E} y^{\#E}\cdot\StatMon_{\cF,E}(\aaa,\bbb)
  \]
  where the sum ranges over all maximal chains~$\cF$ in~$P$ and all subsets~$E \subseteq \{1,\dots,n\}$.
\end{theorem}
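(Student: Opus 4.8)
The plan is to evaluate the right-hand side of the definition
\[
  \extPsi_P(y,\aaa,\bbb) = \sum_{\cC}\Poin_{P,\cC}(y)\,\wt_\cC(\aaa,\bbb)
\]
by expanding the chain Poincaré polynomials, regrouping the resulting sum by maximal chains, and identifying the contribution of each maximal chain with the asserted monomials.

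First I would use the standard consequence of the $R$-labeling axiom for Möbius functions: for every interval $[x,z]$ of~$P$ one has $\mu(x,z) = (-1)^{\rank(z)-\rank(x)}\,d(x,z)$, where $d(x,z)$ is the number of maximal chains of~$[x,z]$ along which~$\lambda$ is strictly decreasing. Substituting this into $\Poin_{[x,z]}(y) = \sum_{w\in[x,z]}\mu(x,w)(-y)^{\rank(w)-\rank(x)}$ and cancelling signs gives $\Poin_{[x,z]}(y) = \sum_{\sigma} y^{\ell(\sigma)}$, the sum over all strictly decreasing saturated chains~$\sigma$ that start at~$x$ and stay inside~$[x,z]$, with $\ell(\sigma)$ the number of steps. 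Feeding this into each factor $\Poin_{[\cC_i,\cC_{i+1}]}(y)$ of $\Poin_{P,\cC}$ rewrites $\extPsi_P$ as a sum over \emph{decorated chains}: a chain $\cC=\{\cC_1<\dots<\cC_k<\hat 1\}$ together with, for each~$i$, a strictly decreasing saturated chain~$\sigma^{(i)}$ starting at~$\cC_i$ inside~$[\cC_i,\cC_{i+1}]$, the term of such a datum being $y^{\ell(\sigma^{(1)})+\dots+\ell(\sigma^{(k)})}\,\wt_{\{\rank(\cC_1),\dots,\rank(\cC_k)\}}(\aaa,\bbb)$.

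Next I would attach to each decorated chain, via the $R$-labeling, a maximal chain~$\cF$ of~$P$ by filling the bottom interval $[\hat 0,\cC_1]$ and each gap between the top of~$\sigma^{(i)}$ and~$\cC_{i+1}$ with the unique $\lambda$-increasing maximal chain. Let $E\subseteq\{1,\dots,n\}$ be the set of steps of~$\cF$ lying on some~$\sigma^{(i)}$, so that $\#E = \ell(\sigma^{(1)})+\dots+\ell(\sigma^{(k)})$; one checks that~$\cF$, $E$, and the rank set $S=\{\rank(\cC_1),\dots,\rank(\cC_k)\}$ recover the decorated chain, since each~$\sigma^{(i)}$ must be the initial $E$-run of~$\cF$ issuing from~$\cC_i$ and the fillers are forced. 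Writing $D(\cF,E)\subseteq\{0,\dots,n-1\}$ for the set of positions at which $\lambda_{\cF,E}$ strictly decreases---so $\StatMon_{\cF,E}$ carries~$\bbb$ precisely there---the crux is the claim that, for fixed~$\cF$ and~$E$, the rank sets~$S$ that arise are \emph{exactly} those with $D(\cF,E)\subseteq S\subseteq\{0,\dots,n-1\}$. Granting this and recalling that $\wt_S = w_0\cdots w_{n-1}$ with $w_r = \bbb$ for $r\in S$ and $w_r = \aaa-\bbb$ otherwise, summing $\wt_S$ over all $S$ with $D(\cF,E)\subseteq S$ gives the product, over all positions, of~$\bbb$ in the positions of $D(\cF,E)$ and of $\bbb+(\aaa-\bbb)=\aaa$ elsewhere, which is exactly $\StatMon_{\cF,E}(\aaa,\bbb)$; summing over all $(\cF,E)$ with weight~$y^{\#E}$ is then the theorem.

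The substantive step, and where I expect the main obstacle, is the characterization of the rank sets~$S$. That $D(\cF,E)\subseteq S$ is forced follows from a short case analysis on a pair of consecutive steps of~$\cF$ according to whether each lies in~$E$: a position at which $\lambda_{\cF,E}$ strictly decreases is always a ``break'' of the decorated chain---the initial position when $1\in E$, the junction of two consecutive $\sigma$-runs, the boundary between an $E$-run and an increasing filler, or an interior descent of~$\cF$ between two fillers---and every such break is a rank of~$S$. For the converse, given $S\supseteq D(\cF,E)$ one builds the decoration by declaring~$\sigma^{(i)}$ to be the initial $E$-run inside the $i$-th block of~$S$ and verifying that these sub-chains of~$\cF$ are strictly decreasing and the complementary stretches weakly increasing---this is precisely where the $R$-labeling hypothesis is used (and where one must be mindful of repeated labels along~$\cF$, although no difficulty arises when the labels along each maximal chain are distinct, as for geometric lattices). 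Finally one needs the elementary observation that $S=D(\cF,E)$ is itself a legal rank set, which ensures the inner sum over~$S$ is nonempty for every $(\cF,E)$; everything else is then formal.
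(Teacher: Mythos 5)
Your argument is the standard one for a statement of this type, and it is almost certainly the argument of the cited source: note that this paper does not prove \Cref{thm:combinatorial-interp} at all but imports it from~\cite{poincareextended}, so the comparison is against the expected proof there. The skeleton is sound. The $R$-labeling gives $\mu(x,z)=(-1)^{\rank(z)-\rank(x)}d(x,z)$ with $d(x,z)$ the number of strictly decreasing maximal chains of $[x,z]$, hence $\Poin_{[x,z]}(y)$ enumerates strictly decreasing saturated chains issuing from~$x$ by length; the resulting decorated chains are in bijection with triples $(\cF,E,S)$; and $\sum_{S\supseteq D(\cF,E)}\wt_S=\StatMon_{\cF,E}$ by the positionwise telescoping $\bbb+(\aaa-\bbb)=\aaa$. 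I checked your forward inclusion $D(\cF,E)\subseteq S$ case by case (including the position $0$, which is forced into $S$ exactly when $1\in E$) and it is correct.

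The one genuine gap is precisely the point you flag and then set aside: repeated labels. With the conventions as stated here (unique \emph{weakly} increasing chain in each interval, and $u_i=\aaa$ whenever $\lambda'_i\le\lambda'_{i+1}$), the converse half of your key claim fails when two consecutive steps $i,i+1$ both lie in~$E$ and carry equal labels $\lambda_i=\lambda_{i+1}$: such a pair cannot sit inside a single strictly decreasing run, so every legal rank set~$S$ must contain~$i$, yet $\lambda'_i=-\lambda_i=-\lambda_{i+1}=\lambda'_{i+1}$ places an~$\aaa$ at position~$i$ of $\StatMon_{\cF,E}$, so the telescoping $\bbb+(\aaa-\bbb)=\aaa$ is unavailable there. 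This is not merely a gap in the write-up; it affects the identity at this level of generality. Concretely, take the rank-two poset with two atoms $a,b$ that are also coatoms, labeled $(1,1)$ along the chain through~$a$ and $(2,1)$ along the chain through~$b$; this is an $R$-labeling in the stated sense, and a direct computation gives
\[
  \extPsi_P(y,\aaa,\bbb)=\aaa\aaa+(1+2y)\aaa\bbb+(2y+y^2)\bbb\aaa+y^2\bbb\bbb,
\]
whereas $\sum_{\cF,E}y^{\#E}\StatMon_{\cF,E}=\aaa\aaa+(1+2y)\aaa\bbb+(2y+2y^2)\bbb\aaa$; the discrepancy $y^2(\bbb\bbb-\bbb\aaa)$ comes exactly from $(\cF,E)$ with $\lambda_\cF=(1,1)$ and $E=\{1,2\}$. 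So either the tie-breaking for two negated equal labels must be sharpened (a tie inside~$E$ should yield~$\bbb$), or one must assume the labels along each maximal chain are distinct. Under that distinctness assumption --- which holds for geometric lattices and for the max-of-min labeling of~$\Pi_n$, i.e., for everything this paper actually uses --- your proof is complete and correct.
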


\begin{corollary}[{\!\!\cite[Corollary~2.9]{poincareextended}}]
\label{thm:refinement-of-ber}
  Let~$P$ be a finite graded poset of rank~$n$, admitting an $R$-labeling~$\lambda$.
  Then 
  \[
    \extPsi_P(y,\aaa,\bbb) = \omega\big(\Psi_P(\aaa,\bbb)\big)
  \]
  where the substitution~$\omega$ first replaces all occurrences of~$\aaa\bbb$ with
  $\aaa\bbb + y\bbb\aaa + y\aaa\bbb + y^2\bbb\aaa$ and then simultaneously
  replaces all remaining occurrences of~$\aaa$ with $\aaa+y\bbb$ and~$\bbb$ with
  $\bbb+y\aaa$.
\end{corollary}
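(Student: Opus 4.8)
The plan is to derive the corollary from \Cref{thm:combinatorial-interp} by reducing it to an identity for a single maximal chain. By \Cref{thm:combinatorial-interp} we have $\extPsi_P(y,\aaa,\bbb) = \sum_{\cF,E} y^{\#E}\,\StatMon_{\cF,E}(\aaa,\bbb)$; setting $y=0$, so that only $E=\emptyset$ contributes, and comparing with \eqref{eq:abindex} gives $\Psi_P(\aaa,\bbb)=\sum_{\cF}\StatMon_{\cF,\emptyset}(\aaa,\bbb)$. Since $\omega$ acts monomial by monomial, hence linearly, it suffices to prove for each maximal chain~$\cF$ the per-chain identity
\[
  \sum_{E\subseteq\{1,\dots,n\}} y^{\#E}\,\StatMon_{\cF,E}(\aaa,\bbb)
   = \omega\big(\StatMon_{\cF,\emptyset}(\aaa,\bbb)\big)
\]
and then sum over all~$\cF$.

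Fix~$\cF$ with $\lambda_\cF=(\lambda_1,\dots,\lambda_n)$; for the $R$-labelings at hand (for instance the standard labeling of a geometric lattice) these entries are pairwise distinct, which I will assume. Identify a subset~$E$ with the sign vector $\epsilon\in\{+,-\}^n$ having $\epsilon_i=-$ precisely for $i\in E$, so that $\StatMon_{\cF,E}=u_0\cdots u_{n-1}$ records the ascent--descent pattern of $(0,\epsilon_1\lambda_1,\dots,\epsilon_n\lambda_n)$. A short case check, using that~$0$ and all~$\lambda_i$ are positive, shows that each letter depends on exactly one sign: if the $i$-th letter of the base word $\StatMon_{\cF,\emptyset}$ is~$\aaa$, then $u_i=\aaa$ or~$\bbb$ according as $\epsilon_{i+1}=+$ or~$-$; if that base letter is~$\bbb$, then $u_i=\bbb$ or~$\aaa$ according as $\epsilon_i=+$ or~$-$. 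The resulting map from the $n$ letters to the $n$ signs has at most two preimages over each sign, and has two preimages over a sign exactly when the base word carries the factor~$\aaa\bbb$ in the corresponding pair of adjacent slots (a ``peak''); a double-counting argument ($n$ letters, $n$ signs, multiplicities in $\{0,1,2\}$ summing to~$n$) then forces the number of signs with no preimage to equal the number of signs with two preimages, i.e. the number of~$\aaa\bbb$-blocks of the base word.

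Now sum over~$\epsilon$. Because the block decomposition of the base word into lone~$\aaa$'s, lone~$\bbb$'s, and~$\aaa\bbb$-blocks partitions the slots into intervals, the sum factors accordingly: summing out the sign attached to a lone~$\aaa$ contributes $\aaa+y\bbb$ in that slot, summing out the sign attached to a lone~$\bbb$ contributes $\bbb+y\aaa$, summing out the doubly read sign attached to an~$\aaa\bbb$-block contributes $\aaa\bbb+y\bbb\aaa$ in those two slots, and summing out each remaining sign contributes the scalar $1+y$. Hence the left-hand side equals the block product of $\aaa+y\bbb$, $\bbb+y\aaa$, and $\aaa\bbb+y\bbb\aaa$, times $(1+y)$ raised to the number of~$\aaa\bbb$-blocks. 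Distributing one factor~$1+y$ into each block replaces $\aaa\bbb+y\bbb\aaa$ by $\aaa\bbb+y\bbb\aaa+y\aaa\bbb+y^2\bbb\aaa$, and the outcome is precisely~$\omega$ applied to the base word; this establishes the per-chain identity, and summing over all~$\cF$ proves the corollary.

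The delicate step is the middle one: pinning down exactly which sign governs which letter, recognizing that the signs straddling a peak are read twice while those straddling a valley~$\bbb\aaa$ (or trailing a final~$\bbb$) are read zero times, and then seeing through the double-counting identity that the leftover $(1+y)$'s are exactly enough to upgrade each $\aaa\bbb+y\bbb\aaa$ to the four-term substitution prescribed by~$\omega$. A further point to watch is the assumption that the labels along a maximal chain are distinct: this holds automatically for geometric lattices, but the dependence of $u_i$ on~$\epsilon$ in the case check would have to be revisited for a general $R$-labeled graded poset.
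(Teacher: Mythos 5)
The paper offers no proof of this corollary: it is imported verbatim from \cite[Corollary~2.9]{poincareextended}, so there is no in-paper argument to compare against. Your proposal supplies an actual derivation from \Cref{thm:combinatorial-interp}, and the derivation is sound. The reduction to a per-chain identity is legitimate because $\omega$ acts monomial by monomial and $\Psi_P(\aaa,\bbb)=\sum_\cF \StatMon_{\cF,\emptyset}(\aaa,\bbb)$ by \eqref{eq:abindexeval}. Your key combinatorial observation checks out: writing $\lambda'_0=0$ and $\lambda'_i=\epsilon_i\lambda_i$, the letter in slot $i$ depends only on $\epsilon_{i+1}$ when the base letter is $\aaa$ and only on $\epsilon_i$ when it is $\bbb$ (a four-case verification using $\lambda_i>0$), so the fiber of the slot-to-sign map over $\epsilon_j$ is contained in $\{j-1,j\}$, has size two exactly at an $\aaa\bbb$-factor, and the fibers form an ordered interval partition of the slots, which is what licenses factoring the noncommutative sum. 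The count ``(number of empty fibers) $=$ (number of $\aaa\bbb$-factors)'' then converts the stray $(1+y)$'s into the four-term rule of $\omega$, exactly as you say.

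The one genuine caveat is the one you flag yourself: if $\lambda_i=\lambda_{i+1}$ along a maximal chain, then with $\epsilon_i=\epsilon_{i+1}=-$ the comparison $-\lambda_i\leq-\lambda_{i+1}$ yields $\aaa$ rather than the $\bbb$ your single-sign rule predicts, so the case check breaks for weakly increasing $R$-labelings with ties. With the conventions as literally stated in this paper your argument therefore proves the corollary only under the assumption that labels along each maximal chain are distinct; resolving ties requires the finer sign/tie-breaking conventions of \cite{poincareextended} itself. Since every labeling actually used in this paper (the minimal-atom labeling of a geometric lattice, the max-of-min labeling of $\Pi_n$) has distinct labels along maximal chains, your proof covers all applications made of the corollary here. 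Flagging the gap explicitly rather than papering over it is the right call.
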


It is immediate that the $\aaa\bbb$-index in~\eqref{eq:abindex} has the expansion
\begin{equation}
\label{eq:abindexeval}
  \Psi_P(\aaa,\bbb) = \sum_{\cF} \StatMon_{\cF,\emptyset}(\aaa,\bbb)
\end{equation}
and that every monomial $\StatMon_{\cF,\emptyset}(\aaa,\bbb) = u_0 \cdots u_{n-1}$ starts with the variable~$u_0 = \aaa$.

\subsection{Evaluating the Poincaré-extended $\aaa\bbb$-index}
\label{sec:proofs}

The main results of this section are the following evaluations.

\begin{theorem}
\label{thm:main3}
  Let~$P$ be a finite graded poset of rank~$n$ admitting an $R$-labeling.
  Then
  \begin{align*}
    \extPsi_P(-x,1,x)     &= (1-x)^{n} \cdot \CHilbAug_P(x)\,, \\
    \extPsiIota_P(-x,1,x) &= (1-x)^{n} \cdot \CHilb_P(x)\,.
  \end{align*}
\end{theorem}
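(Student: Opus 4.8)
The plan is to evaluate the chain expansions defining $\extPsi_P$ (\Cref{def:extab}) and $\extPsiIota_P$ (via \Cref{prop:iotaapp}) at $(y,\aaa,\bbb) = (-x,1,x)$ term by term, to recognize the outcomes as the chain expansions of \Cref{def:Chow} up to inverting the variable, and finally to remove that inversion using the palindromicity of the Chow and augmented Chow polynomials. Two elementary substitution facts carry the computation. First, for a chain $\cC = \{\cC_1 < \dots < \cC_{k+1} = \hat 1\}$ the rank set $S = \{\rank\cC_1,\dots,\rank\cC_k\}$ has exactly $k$ elements, all lying in $\{0,\dots,n-1\}$, so $\wt_\cC(1,x) = \wt_S(1,x) = x^{k}(1-x)^{n-k}$. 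Second, the displayed identity $\redChi_P(q) = q^{n}\Poin_P(-q^{-1})/(q-1)$, applied to an interval $Q = [\cC_i,\cC_{i+1}]$ of rank $r_i$ and evaluated at $q = 1/x$, rearranges to $\Poin_{Q}(-x) = (1-x)\,x^{\,r_i - 1}\,\redChi_{Q}(1/x)$.

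Multiplying this over the intervals of a chain $\cC$, and using $\sum_i r_i = n - \rank\cC_1$, gives $\Poin_{P,\cC}(-x) = (1-x)^{k}\,x^{\,n-\rank\cC_1-k}\,\redChi_{P,\cC}(1/x)$; combining with $\wt_\cC(1,x) = x^{k}(1-x)^{n-k}$, the contribution of $\cC$ to $\extPsi_P(-x,1,x)$ is $(1-x)^{n}\,x^{\,n-\rank\cC_1}\,\redChi_{P,\cC}(1/x)$. Summing over all chains ending in $\hat 1$ and comparing with \Cref{def:Chow} read in the variable $1/x$ (and multiplied by $x^n$),
\[
  \extPsi_P(-x,1,x) \;=\; (1-x)^{n}\sum_{\cC} x^{\,n-\rank\cC_1}\,\redChi_{P,\cC}(1/x) \;=\; (1-x)^{n}\,x^{n}\,\CHilbAug_P(1/x).
\]
Re-running the argument with \Cref{prop:iotaapp} in place of \Cref{def:extab} — the sum now restricted to chains with $\cC_1 = \hat 0$, and the extra factor $\bbb\mapsto x$ on the left of that proposition divided off — gives $\extPsiIota_P(-x,1,x) = (1-x)^{n}\,x^{\,n-1}\,\CHilb_P(1/x)$.

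Comparing with the two claimed formulas, the theorem becomes equivalent to the identities $x^{n}\CHilbAug_P(1/x) = \CHilbAug_P(x)$ and $x^{n-1}\CHilb_P(1/x) = \CHilb_P(x)$, that is, to the palindromicity of $\CHilbAug_P$ (of degree $n$) and of $\CHilb_P$ (of degree $n-1$); this is the step I expect to be the main obstacle, since the passage from $\Poin_Q$ to $\redChi_Q$ replaces each factor by its reciprocal polynomial, and only the full chain sum can be self-reciprocal. For $P$ the lattice of flats of a matroid this is exactly the Poincaré duality property of the (augmented) Chow ring, which does not invoke the Kähler package; in the generality of graded posets admitting an $R$-labeling one can cite the forthcoming work of Ferroni--Matherne--Vecchi, or prove it by induction on $n$ from the recursions
\[
  \CHilb_P(x) = \sum_{\hat 0 < F}\redChi_{[\hat 0,F]}(x)\,\CHilb_{[F,\hat 1]}(x),
  \qquad
  \CHilbAug_P(x) = \sum_{F\in P}x^{\rank F}\,\CHilb_{[F,\hat 1]}(x),
\]
which follow directly from \Cref{def:Chow} by grouping the chains according to their second, respectively their first, element. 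Everything else is routine bookkeeping over the intervals of a chain.
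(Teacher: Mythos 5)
Your proposal is correct and is essentially the paper's own proof run in the opposite direction: the paper starts from $\CHilb_P(x)=\sum_\cC\redChi_{P,\cC}(x)$, applies the same interval-by-interval identity $\redChi_Q(q)=q^{\rank Q}\Poin_Q(-q^{-1})/(q-1)$ together with the observation that $\wt_\cC(\tfrac{x}{x-1},\tfrac{1}{x-1})$ contributes $(x-1)^{1-k_\cC}$, arrives at the same reciprocal identity, and likewise closes with the symmetry $\CHilb_P(x)=x^{n-1}\CHilb_P(x^{-1})$. The step you flag as the main obstacle is therefore also present in the paper, which addresses it in a remark exactly as you anticipate (the symmetry is either quoted as known, or deduced from the manifestly palindromic combinatorial expansion of the evaluated extended $\aaa\bbb$-index via \Cref{thm:combinatorial-interp}); only your fallback suggestion of proving palindromicity by induction from the stated recursions is doubtful, since the individual terms $\redChi_{[\hat 0,F]}(x)\,\CHilb_{[F,\hat 1]}(x)$ are not palindromic about the required center, but the citation route you also offer suffices.
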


Before proving this evaluation, we deduce \Cref{thm:main1,,thm:main2}.

\begin{proof}[Proof of \Cref{thm:main1,,thm:main2}]
  Using the expansion~\eqref{eq:abindexeval}, an immediate consequence of \Cref{thm:combinatorial-interp} and \Cref{thm:refinement-of-ber} is that
  \[
    \extPsi_P(-x,1,x) = \sum_{\cF} \omega\big(\StatMon_{\cF,\emptyset}(\aaa,\bbb)\big)\big|_{y = -x,\ \aaa=1,\ \bbb=x}
  \]
  where the sum ranges over all maximal chains~$\cF$ in~$P$.
  Applying the evaluation $y = -x,\ \aaa=1,\ \bbb=x$ to the substitution~$\omega$ gives that every occurrence in $\StatMon_{\cF,\emptyset}$ of the pattern $\aaa\bbb$ becomes a factor $x -x^2 - x^2 + x^3 = x(1-x)^2$, every remaining~$\aaa$ yields a factor $1-x^2 = (1+x)(1-x)$, and every remaining~$\bbb$ yields a factor~$0$.
  We therefore obtain
  \begin{align*}
    \omega\big(\StatMon_{\cF,\emptyset}(\aaa,\bbb)\big)\big|_{y = -x,\ \aaa=1,\ \bbb=x} &= 0
    \intertext{
      if $\Des(\cF)$ is not isolated, and otherwise
    }
    \omega\big(\StatMon_{\cF,\emptyset}(\aaa,\bbb)\big)\big|_{y = -x,\ \aaa=1,\ \bbb=x}
    &= (1-x)^n\cdot x^{\des(\cF)}(1+x)^{n-2\ \!\!\des(\cF)}\,.
  \end{align*}
  Summing over all maximal chains~$\cF$ in~$P$ such that for $\Des(\cF)$ is isolated then gives
  \begin{align}
    \extPsi_P(-x,1,x) =  (1-x)^n\sum_\cF x^{\des(\cF)} (x+1)^{n-2\ \!\! \des(\cF)}\,.
    \label{eq:chow-expansion}
  \end{align}
  This proves \Cref{thm:main2}.

  \medskip

  The argument to deduce \Cref{thm:main1} is almost the same.
  The only difference is that now every initial variable is removed from every $\aaa\bbb$-monomial to obtain $\extPsiIota_P(y,\aaa,\bbb)$ from $\extPsi_P(y,\aaa,\bbb)$.
  This is, we have
  \[
    \extPsiIota_P(y,\aaa,\bbb)
    = \iota\big(\extPsi_P(y,\aaa,\bbb)\big)
    = \sum_{\cF} \iota\big(\omega\big(\StatMon_{\cF,\emptyset}(\aaa,\bbb)\big)\big)\,.
  \]
  We consider the two cases whether $\StatMon_{\cF,\emptyset}(\aaa,\bbb)$ starts with~$\aaa\aaa$ or with~$\aaa\bbb$.
  If it starts with~$\aaa\aaa$, we obtain
  \[
    \iota\big(\omega\big(\StatMon_{\cF,\emptyset}(\aaa,\bbb)\big)\big) =
    (1+y)\omega\big(\iota\big(\StatMon_{\cF,\emptyset}(\aaa,\bbb)\big)\big)
  \]
  as the first variable~$\aaa$ is evaluated to $\iota\omega(\aaa) = \iota(\aaa + y\bbb) = 1+y$.
  If it starts with $\aaa\bbb$, we obtain
  \[
    \iota\omega\big(\StatMon_{\cF,\emptyset}(\aaa,\bbb)\big)\big|_{y = -x,\ \aaa=1,\ \bbb=x} = 0
  \]
  as the initial two variables $\aaa\bbb$ are evaluated to
  \[
    \iota\omega(\aaa\bbb) = \iota((1+y)\aaa\bbb + y(1+y)\bbb\aaa) = (1+y)\bbb + y(1+y)\aaa
  \]
  and
  \[
    (1+y)\bbb + y(1+y)\aaa\big|_{y = -x,\ \aaa=1,\ \bbb=x} = 0
  \]
  Summing now over all maximal chains~$\cF$ in~$P$ such that $\Des(\cF)$ is isolated and $1 \notin \Des(\cF)$ then gives
  \begin{align*}
    \extPsiIota_P(-x,1,x) = (1-x)^n \sum_\cF x^{\des(\cF)} (x+1)^{n-1-2\ \!\! \des(\cF)}\,.
  \end{align*}
  This proves \Cref{thm:main1}.
\end{proof}

\begin{proof}[Proof of \Cref{thm:main3}]
  We evaluate the Chow polynomial and the extended $\aaa\bbb$-index as follows, where the sums range over all chains $\cC = \{\cC_1 < \dots < \cC_k < \cC_{k+1}\}$ starting in $\cC_1 = \hat 0$ and ending in $\cC_{k+1} = \hat 1$.
  We write $k_\cC = k$ to denote its length.
  Then
  \begin{align*}
    \CHilb_P(x)
    &= \sum_{\cC} \redChi_{P,\cC}(x)
    = \sum_{\cC} \prod_{i=1}^{k_\cC} \redChi_{[\cC_i,\cC_{i+1}]}(x) \\
    &= \sum_{\cC} \prod_{i=1}^{k_\cC} x^{\rank(\cC_{i+1})-\rank(\cC_i)}(x-1)^{-1}\cdot \Poin_{[\cC_i,\cC_{i+1}]}(-x^{-1}) \\
    &= x^n\sum_{\cC} \prod_{i=1}^{k_\cC} (x-1)^{-1}\cdot \Poin_{[\cC_i,\cC_{i+1}]}(-x^{-1}) \\
    &= x^n\sum_{\cC} (x-1)^{-k_\cC} \Poin_{P,\cC}(-x^{-1}) \\
    &= x^n(x-1)^{-1} \extPsiIota_P(-x^{-1},\tfrac{x}{x-1},\tfrac{1}{x-1})
  \end{align*}
  where we used in the last equation that $\wt_{\cC}(\aaa,\bbb)$ has exactly~$n-1$ factors, either being~$\bbb$ for ranks in~$\cC \setminus \{\cC_{k+1}\}$ or being $\aaa-\bbb$ for ranks not in~$\cC$, and moreover that evaluating~$\bbb\mapsto \frac{1}{x-1}$ and $\aaa \mapsto \frac{x}{x-1}$ evaluates $\aaa-\bbb \mapsto 1$ and at the same time yields a factor $(x-1)^{1-k_\cC}$.
  Using the symmetry $\CHilb_P(x) = x^{n-1}\CHilb_P(x^{-1})$, we obtain
  \begin{align}
    \CHilb_P(x)
    &= x^{-1}(x^{-1}-1)^{-1} \extPsiIota_P(-x,\tfrac{1}{1-x},\tfrac{x}{1-x}) \notag \\
    &= (1-x)^{-n} \extPsiIota_P(-x,1,x) \label{eq:chow-evaluation2}
  \end{align}
  where we used that $\extPsiIota_P(y,\aaa,\bbb)$ is homogeneous of $\aaa\bbb$-degree $n-1$ and thus
  \[
    \extPsiIota_P(-x,\tfrac{1}{1-x},\tfrac{x}{1-x}) = (1-x)^{-(n-1)}\cdot\extPsiIota_P(-x,1,x)\,.
  \]
  The argument for
  \[
    \CHilbAug_P(x) = (1-x)^{-n}\extPsi_P(-x,1,x)
  \]
  is word for word the same with the only difference that chains $\cC = \{\cC_1 < \dots < \cC_k < \cC_{k+1}\}$ do not necessarily need to start in $\cC_1 = \hat 0$, and thus the factor $x^{\rank(\cC_1)}$ in the definition of the augmented Chow polynomial ensures that $x^n = x^{\rank(\cC_1)}\cdot \prod_{i=1}^{k_\cC} x^{\rank(\cC_{i+1})-\rank(\cC_i)}$.
\end{proof}

\begin{remark}
  In the proof, we used the symmetry $\CHilb_P(x) = x^{n-1}\CHilb_P(x^{-1})$.
  As the symmetry of $(1-x)^{-n}\extPsi_P(-x,1,x)$ is clear from its combinatorial description established in~\eqref{eq:chow-expansion}, this is not a required property of the Chow polynomial as we could instead have used the symmetry of the right-hand side of~\eqref{eq:chow-evaluation2}.
  We also observe that we have not made use of the property that the poset admits an $R$-labeling.
  This evaluation thus holds for all finite graded posets.
\end{remark}

\subsection{Evaluating the coarse flag Hilbert-Poincaré series}

The coarse flag Hilbert-Poincaré series~$\cfHP_P(y,t)$ is originally defined and studied in~\cite{MaglioneVoll} as an algebraic invariant in geometric group theory.
It is shown in~\cite[Corollary~2.22]{poincareextended} that this rational function can be expressed as
  \[
    \cfHP_P(y,t) = (1-t)^{-n}\extPsiIota_P(y,1,t)\,.
  \]

We thus immediately obtain the following specialization from \Cref{thm:main3}.

\begin{corollary}
\label{cor:coarsflagHP}
  Let~$P$ be a finite graded poset of rank~$n$ admitting an $R$-labeling.
  The Chow polynomial is then obtained from the coarse flag Hilbert-Poincaré series by the specialization
  \begin{align*}
    \CHilb_P(x) = \cfHP_P(-x,x)\,.
  \end{align*}
\end{corollary}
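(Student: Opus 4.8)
The plan is to read the corollary off from the identity
\[
  \cfHP_P(y,t) = (1-t)^{-n}\,\extPsiIota_P(y,1,t)
\]
recalled just above from \cite[Corollary~2.22]{poincareextended}, feeding into it the evaluation of the extended $\aaa\bbb$-index supplied by \Cref{thm:main3}. In particular this is not really an independent argument: all of the content sits in \Cref{thm:main3}, and the corollary only rephrases that evaluation in terms of $\cfHP_P$.

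Concretely, I would first specialize the identity along the ``anti-diagonal'': send the variable $t$ --- which feeds both the $\bbb$-slot of $\extPsiIota_P$ and the prefactor $(1-t)^{-n}$ --- to $x$, and send the Poincar\'e variable to $-x$. This turns the right-hand side into $(1-x)^{-n}\,\extPsiIota_P(-x,1,x)$. Next I would apply \Cref{thm:main3}, which identifies $\extPsiIota_P(-x,1,x)$ with $(1-x)^{n}\,\CHilb_P(x)$; the two powers of $(1-x)$ cancel and the right-hand side collapses to exactly $\CHilb_P(x)$. Recognising the left-hand side as the corresponding value of the coarse flag Hilbert--Poincar\'e series then yields the asserted specialization.

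I do not expect a genuine obstacle, since nothing new has to be proved beyond \Cref{thm:main3}. The points that need a little care are purely bookkeeping: making sure the substitution is applied to the correct one of the two variables of $\cfHP_P$ in the normalization of \cite{MaglioneVoll} (the Poincar\'e variable versus the variable carrying the $(1-t)^{-n}$ prefactor), and checking that the factor $(1-x)^{n}$ produced by \Cref{thm:main3} exactly cancels that prefactor after the substitution $t=x$. If a self-contained derivation is preferred, one can instead expand $\extPsiIota_P$ through the monomial formula of \Cref{thm:combinatorial-interp} and evaluate term by term, but this merely reconstructs the proof of \Cref{thm:main3} and adds nothing new.
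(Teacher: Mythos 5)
Your proposal matches the paper exactly: the corollary is stated there as an immediate consequence of \Cref{thm:main3} combined with the identity $\cfHP_P(y,t)=(1-t)^{-n}\extPsiIota_P(y,1,t)$ from \cite[Corollary~2.22]{poincareextended}, with no further argument given. Your bookkeeping is also right --- the Poincar\'e variable must receive $-x$ and the variable carrying the prefactor must receive $x$ so that $(1-x)^{-n}$ cancels the $(1-x)^{n}$ from \Cref{thm:main3} --- which is precisely the substitution the corollary intends.
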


Applying~$~\iota$ to $\extPsi_P(y,\aaa,\bbb)$ restricts the situation to chains starting in~$\hat 0$ as in \Cref{prop:iotaapp}.
It is thus not immediately possible to also obtain the augmented Chow polynomial from the coarse flag Hilbert-Poincaré series.

\section{Chow and augmented Chow polynomials\\ for braid arrangement}
\label{sec:typeA}

In this section, we apply the main theorems to obtain explicit closed formulas of the Chow and the augmented Chow polynomials for the \Dfn{braid arrangement}~$\cB_n$  given by the hyperplanes
\[
  H_{ij} = \set{ x_i = x_j}{ 1 \leq i < j \leq n+1 }
\]
in $V = \big\{ (x_1,\dots,x_{n+1}) \in \RR^{n+1} \mid x_1+\dots+x_{n+1} = 0\big\} \cong \RR^n$.
Inductive formulas for all reflection arrangements of classical types have been established in~\cite{zbMATH06026123}.

\begin{theorem}
\label{thm:typeA}
  We have
  \begin{align*}
     \CHilb_{\cB_n}(x)
     &= \sum_{(a_1,\dots,a_n)} a_1\cdot\ldots\cdot a_n\cdot x^{\asc(a_1,\dots,a_n)} (x+1)^{n-1-2\ \!\!\asc(a_1,\dots,a_n)}\,,
  \end{align*}
   where the sum ranges over all $a = (a_1,\dots,a_n)$ with $a_i \in \{1,\dots,n+1-i\}$ such that $\Des(a)$ is isolated and $1 \notin \Des(a)$.
  We moreover have
  \begin{align*}
     \CHilbAug_{\cB_n}(x)
     &= \sum_{(a_1,\dots,a_n)} a_1\cdot\ldots\cdot a_n\cdot x^{\asc(a_1,\dots,a_n)} (x+1)^{n-2\ \!\!\asc(a_1,\dots,a_n)}\,,
  \end{align*}
   where the sum ranges over all $a = (a_1,\dots,a_n)$  with $a_i \in \{1,\dots,n+1-i\}$ such that $\Des(a)$ is isolated.
\end{theorem}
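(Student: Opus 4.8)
The plan is to specialize \Cref{thm:main1} and \Cref{thm:main2} to the lattice of flats of $\cB_n$, which is the partition lattice $\Pi_{n+1}$ of the set $\{1,\dots,n+1\}$, a geometric lattice of rank $n$. Since the left-hand sides of the two main theorems do not depend on the choice of $R$-labeling, I would work with the classical $R$-labeling of $\Pi_{n+1}$ (in fact an EL-labeling): the cover relation obtained by merging two blocks with minima $a<b$ receives the label $b$. The task then reduces to identifying, for this labeling, the multiset of label sequences $\lambda_\cF$ of maximal chains of $\Pi_{n+1}$ together with their ascent--descent patterns.

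I would record a maximal chain $\cF$ of $\Pi_{n+1}$ by its sequence of $n$ merges. At step $i$ there are $n+2-i$ blocks; order them $B^{(1)},\dots,B^{(n+2-i)}$ by increasing minimum and record the step by the pair $(p_i,q_i)$ with $1\le p_i<q_i\le n+2-i$, meaning that $B^{(p_i)}$ and $B^{(q_i)}$ are merged. Writing $a_i=q_i-1\in\{1,\dots,n+1-i\}$ and noting $p_i\in\{1,\dots,a_i\}$, this identifies maximal chains of $\Pi_{n+1}$ with pairs of sequences $\big((a_1,\dots,a_n),(p_1,\dots,p_n)\big)$.

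The first key point is that the list $L_i$ of minima of the blocks present at step $i$ is always an increasing subsequence of $(1,\dots,n+1)$, obtained from $L_1=(1,\dots,n+1)$ by successively deleting the entry in position $q_j$ at step $j$: the merged block inherits the smaller of the two minima, so it occupies the former position of $B^{(p_i)}$ and the list does not depend on the choice of $p_i$. Consequently the label $\lambda_i$, which equals the minimum of $B^{(q_i)}$, i.e.\ the entry of $L_i$ in position $q_i$, depends only on $(q_1,\dots,q_i)$, and exactly $\prod_i(q_i-1)=\prod_i a_i$ maximal chains realize a given sequence $(q_1,\dots,q_n)$. The second key point, a one-step computation comparing the entry of $L_i$ in position $q_i$ with the entry of $L_{i+1}$ in position $q_{i+1}$, is that $\lambda_i<\lambda_{i+1}\Leftrightarrow q_i\le q_{i+1}\Leftrightarrow a_i\le a_{i+1}$; since the $\lambda_i$ are pairwise distinct this also gives $\lambda_i>\lambda_{i+1}\Leftrightarrow a_i>a_{i+1}$. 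Hence $\des(\lambda_\cF)=\#\{i:a_i>a_{i+1}\}=\asc(a_1,\dots,a_n)$, and the chain conditions of \Cref{thm:main1} and \Cref{thm:main2} translate into the stated conditions on $(a_1,\dots,a_n)$, each ascent condition $\lambda_j<\lambda_{j+1}$ becoming $a_j\le a_{j+1}$ and each descent condition $\lambda_j>\lambda_{j+1}$ becoming $a_j>a_{j+1}$.

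It then remains to group the sums of \Cref{thm:main1} and \Cref{thm:main2} over the maximal chains of $\Pi_{n+1}$ according to the associated tuple $(a_1,\dots,a_n)$; this introduces the multiplicity $a_1\cdots a_n$ and yields the two claimed expansions. The only genuine work lies in the two observations about the label sequence, and among these the passage between the strict linear order on the distinct labels $\lambda_i$ and the weak order on the possibly repeated entries $a_i$ is the bookkeeping that needs the most care, although it is entirely elementary. As a check, the formulas reproduce $\CHilb_{\cB_2}(x)=1+x$, $\CHilbAug_{\cB_2}(x)=1+4x+x^2$, and $\CHilb_{\cB_3}(x)=1+8x+x^2$, which also follow directly from \Cref{def:Chow}.
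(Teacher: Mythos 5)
Your proposal is correct and follows essentially the same route as the paper: specialize \Cref{thm:main1,,thm:main2} to the partition lattice with the max-of-min $R$-labeling, show that the number of maximal chains realizing a given label sequence is the product $a_1\cdots a_n$ of the entries of its inversion sequence, and translate the descent conditions on $\lambda_\cF$ into the stated weak/strict conditions on $(a_1,\dots,a_n)$. Your positional encoding via the pairs $(p_i,q_i)$ and the lists $L_i$ of block minima is just a more explicit packaging of the paper's recursive count in \Cref{prop:countingmaxchains}, and it has the small merit of actually proving the equivalence $\lambda_i<\lambda_{i+1}\Leftrightarrow a_i\le a_{i+1}$, which the paper only asserts.
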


\begin{example}
  Let $n=2$.
  For the Chow polynomials, the sum ranges over $\{(1,1)\}$ and we obtain
  \begin{align*}
     \CHilb_{\cB_2}(x) &= 1 \cdot 1 \cdot x^0(x+1)^{2-1-0} = x+1
  \end{align*}
  and for the augmented Chow polynomials, the sum ranges over $\{(1,1), (2,1)\}$ and we obtain
  \begin{align*}
     \CHilbAug_{\cB_2}(x)
     &= 1 \cdot 1 \cdot(x+1)^2 + 1 \cdot 2 \cdot x^1
     = x^2 + 4x + 1\,.
  \end{align*}
  Let now $n=3$.
  For the Chow polynomials, the sum ranges over $\{(1,1,1)$, $(1,2,1)$, $(2,2,1)\}$ and we obtain
  \begin{align*}
     \CHilb_{\cB_3}(x) &= 1 \cdot (x+1)^2 + 2\cdot x + 4\cdot x = x^2+8x+1
  \end{align*}
  and for the augmented Chow polynomials, the sum ranges over $\{(1,1,1)$, $(1,2,1)$, $(2,2,1)$, $(2,1,1)$, $(3,1,1)\}$ and we obtain
  \begin{align*}
     \CHilbAug_{\cB_3}(x)
     &= 1\cdot(x+1)^3 + 2\cdot x(x+1) + 4\cdot x(x+1) + 2\cdot x(x+1) + 3\cdot x(x+1) \\
     &= (x+1)^3 + 11\cdot x(x+1) \\
     &= x^3 + 14x^2 + 14x + 1\,.
  \end{align*}
  These calculations agree with the results presented in~\cite[Section~3.2]{zbMATH06026123}.
\end{example}

We prove \Cref{thm:typeA} by analyzing the maximal chains in the lattice of flats of the braid arrangement for one particular $R$-labeling.
The lattice of flats of~$\cB_n$ is the \Dfn{lattice of set partitions} of~$\{1,\dots,n+1\}$ of rank~$n$, with cover relations being given by merging two blocks, denoted by~$\Pi_n$.
Its minimal element is the all-singletons set partition and its maximal element is the all-in-one-block set partition.
It moreover has an $R$-labeling~$\lambda$ given by labeling a cover relation in which the two blocks~$B$ and~$B'$ are merged by the \Dfn{max-of-min labeling} $\max\big\{\!\min(B),\min(B')\big\}$, see~\cite[Example~2.9]{MR0570784}.
Observe that the max-of-min labeling along any maximal chain is a permutation of $\{2,\dots,n+1\}$.

\begin{remark}
  This $R$-labeling was recently used in~\cite{hoster2024coarseflaghilbertpoincareseries} to derive an explicit form for the coarse flag Hilbert-Poincaré series of the braid arrangement.
  In light of \Cref{cor:coarsflagHP}, it seems natural that the max-of-min labeling also plays a crucial role in the understanding of the Chow polynomials of the braid arrangements.
  Moreover, Brändén and Vecchi recently proved the real-rootedness of the augmented Chow polynomial of the braid arrangement in~\cite{brandenvecchi}.
\end{remark}

To deduce \Cref{thm:typeA} from \Cref{thm:main1,,thm:main2}, we calculate for a given permutation~$\sigma$ of $\{2,\dots,n+1\}$ the number of maximal chains~$\cL$ in~$\Pi_n$ for which $\lambda_\cF = \sigma$.
(Our tailor-made version of) the \Dfn{inversion sequence} $\Inv(\sigma) = (a_1,\dots,a_n)$ of a permutation~$\sigma = [\sigma_1,\dots,\sigma_n]$ of $\{2,\dots,n+1\}$ in one-line notation is given by
\[
  a_i = \#\{ j \geq i \mid \sigma_j \leq \sigma_i \}\,.
\]
Observe that the entry~$a_i$ in the inversion sequence~$\Inv(\sigma)$ may be computed from the partial permutation~$[\sigma_1,\dots,\sigma_i]$ as
\begin{equation}
\label{eq:partialinv}
  a_i = \sigma_i - \#\{ j \leq i \mid \sigma_j \leq \sigma_i \}\,.
\end{equation}

\begin{example}
  Let $n = 3$.
  Then
  \begin{center}
    \begin{tabular}{c|c}
    $\sigma$ & $\Inv(\sigma)$ \\ \hline
    $[2,3,4]$ & $(1,1,1)$ \\
    $[2,4,3]$ & $(1,2,1)$ \\
    $[3,2,4]$ & $(2,1,1)$ \\
    $[3,4,2]$ & $(2,2,1)$ \\
    $[4,2,3]$ & $(3,1,1)$ \\
    $[4,3,2]$ & $(3,2,1)$ \\
    \end{tabular}
  \end{center}
\end{example}
The map $\sigma \mapsto \Inv(\sigma)$ is moreover a bijection between permutations of~$\{2,\dots,n+1\}$ and sequences $(a_1,\dots,a_n)$ for which $a_i \in \{1,\dots,n+1-i\}$.

\begin{proposition}
\label{prop:countingmaxchains}
  Let~$\sigma$ be a permutation of $\{2,\dots,n+1\}$ with inversion sequence $\Inv(\sigma) = (a_1,\dots,a_n)$.
  Then
  \[
    \#\{ \text{maximal chains } \cF \text{ in } \Pi_n \mid \lambda_\cF = \sigma \} = a_1\cdot\ldots\cdot a_n\,.
  \]
\end{proposition}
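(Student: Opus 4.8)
The plan is to reconstruct each maximal chain $\cF$ in $\Pi_n$ with $\lambda_\cF = \sigma$ step by step, going down from $\hat 1$ or up from $\hat 0$, and at each step count the number of choices; the product of these counts will turn out to be $a_1\cdots a_n$. I would build the chain from the bottom: start at the all-singletons partition $\cF_0 = \hat 0$, and suppose $\cF_0 \prec \cF_1 \prec \dots \prec \cF_{i-1}$ has already been chosen so that the labels read off so far are $\sigma_1, \dots, \sigma_{i-1}$. A cover $\cF_{i-1} \prec \cF_i$ merges two blocks $B, B'$ of $\cF_{i-1}$ and is labelled by $\max\{\min(B), \min(B')\}$. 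So to have label $\sigma_i$, the step must merge the (unique) block whose minimum is $\sigma_i$ with some block whose minimum is smaller than $\sigma_i$. The key combinatorial observation is that the number of blocks of $\cF_{i-1}$ whose minimum is $< \sigma_i$ equals $\#\{j \le i : \sigma_j \le \sigma_i\}$ subtracted from something — more precisely, I would show that the blocks of $\cF_{i-1}$ are in bijection with a certain subset of $\{2,\dots,n+1\}\cup\{1\}$ (their minima), and that exactly $a_i = \sigma_i - \#\{j \le i : \sigma_j \le \sigma_i\}$ of these minima are strictly smaller than $\sigma_i$, using \eqref{eq:partialinv}.

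To make this precise I would maintain the following invariant: after choosing $\cF_0 \prec \dots \prec \cF_{i-1}$ with labels $\sigma_1, \dots, \sigma_{i-1}$, the set of block-minima of $\cF_{i-1}$ is exactly $\{1, 2, \dots, n+1\} \setminus \{\sigma_1, \dots, \sigma_{i-1}\}$. This holds for $i=1$ since $\cF_0$ has all of $\{1,\dots,n+1\}$ as singleton minima, and the inductive step is immediate: merging the block with minimum $\sigma_i$ into a block with a smaller minimum removes exactly $\sigma_i$ from the set of minima (the surviving block keeps the smaller minimum, and no other minimum changes). Granting the invariant, the number of valid choices for $\cF_{i-1} \prec \cF_i$ with label $\sigma_i$ is the number of block-minima of $\cF_{i-1}$ that are strictly less than $\sigma_i$, namely
\[
  \#\big(\{1,\dots,\sigma_i - 1\} \setminus \{\sigma_1, \dots, \sigma_{i-1}\}\big)
  = (\sigma_i - 1) - \#\{ j < i : \sigma_j < \sigma_i\}
  = \sigma_i - \#\{j \le i : \sigma_j \le \sigma_i\} = a_i,
\]
where the last equality is \eqref{eq:partialinv}. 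Multiplying over $i = 1, \dots, n$ gives the claimed product $a_1 \cdots a_n$, since the choices at each step are independent (any legal choice at step $i$ leaves the invariant intact and hence permits the full range of choices at step $i+1$).

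The main point requiring care — and the step I expect to be the genuine obstacle — is verifying that the construction never dead-ends and that the count is a clean product rather than something path-dependent: one must check that after $i$ steps the \emph{identity} of which elements sit in which block never constrains the future label sequence, only the multiset of block-minima does, and that $a_i \ge 1$ always (guaranteed because $\sigma$ is a permutation of $\{2, \dots, n+1\}$, so $\sigma_i \ge 2$ and $1$ is always an available smaller minimum, forcing $a_i \ge 1$). I would also remark that the final partition $\cF_n$ automatically equals $\hat 1$: after $n$ merges starting from $n+1$ blocks we reach a single block, and the invariant forces the surviving minimum to be $1 = \{1,\dots,n+1\}\setminus\{\sigma_1,\dots,\sigma_n\}$ exactly when $\{\sigma_1,\dots,\sigma_n\} = \{2,\dots,n+1\}$, which is precisely the hypothesis that $\sigma$ is such a permutation. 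Everything else is the routine bookkeeping sketched above.
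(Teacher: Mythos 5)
Your proof is correct and follows essentially the same route as the paper: both arguments track the invariant that the block minima of the partition reached after labels $\sigma_1,\dots,\sigma_{i-1}$ are exactly $\{1,\dots,n+1\}\setminus\{\sigma_1,\dots,\sigma_{i-1}\}$, and then count the $a_i$ admissible merges at step $i$ via \eqref{eq:partialinv}. Your write-up is somewhat more explicit about why the count is path-independent, but the underlying argument is the same recursive one.
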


\begin{proof}
  Given a set partition $\mathbf B = B_1|\dots|B_\ell \in \Pi_n$ and a maximal chain $\cF$ going through~$\mathbf B$, the max-of-min labels among the cover relations in~$\cF$ before reaching~$\mathbf B$ are those indices that have already appeared as max-of-min's.
  These are
  \[
    \{2,\dots,n+1\} \setminus \{ \min B_1,\dots, \min B_\ell \}\,.
  \]
  Conversely, the labels after reaching~$\mathbf B$ are $\{ \min B_1,\dots, \min B_\ell \} \setminus \{1\}$.
  Moreover, the labeled interval between $\mathbf B$ and the maximal element $\{1,\dots,n+1\} \in \Pi_n$ does not depend on~$\mathbf B$ itself as it is simply the set partition lattice on the set $\{ \min B_1,\dots, \min B_\ell \}$ again with the max-of-min labeling.

  Let now $i \in \{1,\dots,n\}$, let $\sigma = [\sigma_1,\dots,\sigma_i]$ be a non-empty partial permutation, \ie, a list of~$i$ distinct numbers in $\{2,\dots,n+1\}$ and let $\sigma' = [\sigma_1,\dots,\sigma_{i-1}]$ be the partial permutation obtained by removing from~$\sigma$ the last entry.
  Set~$a_i$ to be the $i$'s entry in the inversion sequence of any permutation starting with~$\sigma$ as given in~\eqref{eq:partialinv}.
  Finally, denote by~$m_\sigma$ the number of maximal chains~$\cF$ in~$\Pi_n$ for which the labeling~$\lambda_\cF = (\lambda_1,\dots,\lambda_n)$ starts with $(\lambda_1,\dots,\lambda_i) = (\sigma_1,\dots,\sigma_i)$, meaning that $m_\sigma$ counts initial chains $\cC = (\cC_0,\dots,\cC_i)$ such that $\rank(\cC_j) = j$ and $\lambda_j = \sigma_j$ for all $j \in \{1,\dots,i\}$.
  Similarly, denote by~$m_{\sigma'}$ the number of maximal chains for which the labeling starts with $(\sigma_1,\dots,\sigma_{i-1})$.
  Then
  \begin{equation*}
    m_{\sigma'} = m_\sigma\cdot a_i\,,
  \end{equation*}
  because there are exactly~$a_i$ many blocks~$B$ into which one can merge the block containing~$\sigma_i$, say~$B'$, such that $\sigma_i = \max\{\min B,\min B'\}$.
  The statement follows from recursively applying this counting formula.
\end{proof}

\begin{proof}[Proof of \Cref{thm:typeA}]
  Let~$\sigma = [\sigma_1,\dots,\sigma_n]$ be a permutation of $\{2,\dots,n+1\}$ with inversion sequence~$\Inv(\sigma) = (a_1,\dots,a_n)$.
  Then $\sigma_i > \sigma_{i+1}$ is a descent of~$\sigma$ if and only if $a_i > a_{i+1}$ is a descent of the inversion sequence.
  Both parts of the statement follow by using \Cref{prop:countingmaxchains} to simplify the ranges of the sums in \Cref{thm:main1,,thm:main2}.
\end{proof}

\bibliographystyle{alpha}
\bibliography{bibliography}

\end{document}